\newtheorem{theorem}{Theorem}[section]
\newtheorem{prop}[theorem]{Proposition}
\newtheorem{lemma}[theorem]{Lemma}
\newtheorem{remark}[theorem]{Remark}
\newtheorem{question}[theorem]{Question}
\begin{document}
\title[Exact forms on almost complex manifolds]{A note on exact forms on almost complex manifolds}
\author{Tedi Draghici}
\address{Department of Mathematics\\Florida International University\\Miami, FL 33199}
\email{draghici@fiu.edu}

\author{Weiyi Zhang}
\address{Department  of Mathematics\\  University of Michigan\\ Ann Arbor, MI 48109}
\email{wyzhang@umich.edu}

 \begin{abstract}
 Reformulations of Donaldson's ``tamed to compatible" question are obtained in terms
 of spaces of exact forms on a compact almost complex manifold $(M^{2n},J)$.
 In dimension 4, we show that $J$ admits a compatible symplectic form if and only if $J$ admits
 tamed symplectic forms with arbitrarily given $J$-anti-invariant parts.
 Some observations about the cohomology of $J$-modified de Rham complexes are also made.
 \end{abstract}
\maketitle

\section{Introduction}

Among other interesting problems raised in \cite{D}, Donaldson asked the following question for a compact
almost complex 4-manifold $(M^4,J)$:
\begin{question} \label{Donaldson} If $J$ is tamed by a symplectic form, is there a symplectic form
compatible with $J$?
\end{question}
\noindent An almost complex structure $J$ on a manifold $M^{2n}$ is {\em tamed} by a
symplectic form $\omega$ (and such an $\omega$ is called $J$-{\em tamed}), if $\omega$ is $J$-positive, i.e.
$$ \omega(X,JX) > 0, \quad \forall X \in T M, \; X \neq 0 . $$
A symplectic form  $\omega$ is {\em compatible} with $J$ (or $J$-{\em compatible}), if $\omega$ is $J$-positive and $J$-invariant, i.e.
$$ \omega(X,JX) > 0 \mbox{ and } \omega(JY,JZ) = \omega(Y,Z),  \quad \forall X, Y, Z \in T M, \; X \neq 0 . $$

From deep works of Taubes and Gromov, Question \ref{Donaldson} was known to have an affirmative answer
on $\mathbb CP^2$. Recently, Taubes \cite{Tau09} showed that the same is true on all compact 4-manifolds with $b^+=1$,
for generic almost complex
structures inducing the given orientation. Some extensions of the results of Taubes are obtained in \cite{LZ2010}.
In particular, it is shown that Question \ref{Donaldson} is true for all almost complex structures on
$\mathbb CP^2\# \overline{\mathbb CP^2}$ and $S^2 \times S^2$.
The question could be asked for higher dimensions as well. It is known however that in dimensions
higher than 4, certain almost complex structures have local obstructions,
coming from the structure of their Nijenhuis tensor, to admitting compatible symplectic forms (see e.g. \cite{Lej}).
Such local obstructions do not exist in dimension 4, or for integrable almost complex structures
in any dimension. Question \ref{Donaldson} was raised for compact complex manifolds of arbitrary dimensions in \cite{LZ2007} and
\cite{ST}. It was answered affirmatively for compact complex surfaces in \cite{LZ2007} 
(see also \cite{DLZ2} for a different proof). Some positive results are known for higher dimensions
(see e.g. \cite{EFV}, \cite{Pet}), but the problem is still open in this case. Note that Theorem 1.4 of \cite{Pet} implies that any
non-K\"ahler Moishezon manifold is also non-tamed.
Even for complex surfaces, there is interest in finding another proof for Question \ref{Donaldson}.
The existing proofs use the celebrated result that a compact complex surface admits a K\"ahler metric
if and only if the first Betti number is even. As Donaldson pointed out, a direct solution for the question
would yield a different proof of this fundamental result.

\vspace{0.1cm}

In this note we describe some reformulations of Question \ref{Donaldson} in terms of
certain spaces of exact forms on an almost complex manifold $(M^{2n},J)$. As application,
we prove a result which can be thought as a further partial answer to Donaldson's question in dimension 4.
\begin{theorem} \label{main}
Let $(M^4,J)$ be a compact almost complex manifold. The following are equivalent:

(i) $J$ admits a compatible symplectic form;

(ii) For any $J$-anti-invariant form $\alpha$, there exists a $J$-tamed symplectic form
whose $J$-anti-invariant part is $\alpha$.
\end{theorem}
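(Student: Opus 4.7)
The plan is to establish the two implications separately, with (ii)$\Rightarrow$(i) being essentially by inspection and (i)$\Rightarrow$(ii) following from a single explicit construction that exploits real dimension four.

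For (ii)$\Rightarrow$(i), I would apply hypothesis (ii) to the trivial $J$-anti-invariant form $\alpha = 0$. The resulting $J$-tamed symplectic form has vanishing $J$-anti-invariant part, so it is $J$-invariant; together with the taming property, this is precisely $J$-compatibility.

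For the substantive direction (i)$\Rightarrow$(ii), let $\omega_0$ be a $J$-compatible symplectic form and let $\alpha$ be a (closed) $J$-anti-invariant $2$-form. I would set $\omega := \omega_0 + \alpha$. Since the decomposition into $J$-invariant and $J$-anti-invariant parts is linear and pointwise, the $J$-anti-invariant part of $\omega$ is $\alpha$ and its $J$-invariant part is $\omega_0$; in particular $\omega$ is closed. The only content lies in checking that $\omega$ is non-degenerate and $J$-tames $J$. Non-degeneracy relies on two facts special to real dimension four. First, bidegree considerations force $\omega_0 \wedge \alpha = 0$: $\omega_0$ is of type $(1,1)$ while $\alpha$ is of type $(2,0)+(0,2)$, and $\Omega^{3,1}$ and $\Omega^{1,3}$ both vanish in complex dimension two. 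Second, every $J$-anti-invariant $2$-form on a $4$-manifold is self-dual with respect to the almost Hermitian metric $g_0(X,Y) := \omega_0(X, JY)$, so $\alpha \wedge \alpha = |\alpha|^2_{g_0}\, dV_{g_0}$ is pointwise non-negative. Combining these,
$$\omega \wedge \omega = \omega_0 \wedge \omega_0 + 2\,\omega_0 \wedge \alpha + \alpha \wedge \alpha = \omega_0 \wedge \omega_0 + \alpha \wedge \alpha > 0.$$
For the taming condition, substituting $Y = JX$ in the defining identity $\alpha(JX, JY) = -\alpha(X, Y)$ and using antisymmetry of $\alpha$ yields $\alpha(X, JX) = 0$, so $\omega(X, JX) = \omega_0(X, JX) > 0$ for every nonzero $X$.

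The only real obstacle is the positivity of $\omega \wedge \omega$, which in this argument rests on the sign $\alpha \wedge \alpha \geq 0$ granted by self-duality. This is the dimensional feature making the construction unconditional in dimension four, and its failure in higher dimensions is of a piece with the local obstructions to compatibility discussed in the introduction. The argument is essentially self-contained once one has extracted the two mechanisms above: the type-based vanishing $\omega_0 \wedge \alpha = 0$ and the self-dual sign of $\alpha \wedge \alpha$.
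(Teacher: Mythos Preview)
Your implication (ii)$\Rightarrow$(i) is fine, and your verification of taming and non-degeneracy for $\omega_0+\alpha$ is correct. The gap is in closedness: you slipped in the parenthetical ``(closed)'' when introducing $\alpha$, but the statement of (ii) quantifies over \emph{all} $\alpha\in\Omega_J^-$, not just those in $\mathcal Z_J^-$. For a generic $\alpha\in\Omega_J^-$ one has $d\alpha\neq 0$, so $\omega_0+\alpha$ is simply not closed and hence not symplectic. Your construction therefore only establishes the special case of (ii) where $\alpha$ is closed, which is much weaker (indeed, $\mathcal Z_J^-$ is finite-dimensional while $\Omega_J^-$ is not).

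What is actually needed for (i)$\Rightarrow$(ii) is, given an arbitrary $\alpha\in\Omega_J^-$, to produce some $\beta\in\Omega_J^{\oplus}$ with $d\beta=-d\alpha$, so that $\beta+\alpha$ is closed; taming and non-degeneracy then follow exactly as in your argument. The paper obtains such a $\beta$ from the four-dimensional identity $d\Omega_J^+=d\Omega^2$ (Proposition~\ref{lemma4d}): one first finds $\theta\in\Omega_J^+$ with $d\theta=-d\alpha$, and then replaces $\theta$ by $n\omega_0+\theta\in\Omega_J^{\oplus}$ for $n$ large (using $d\omega_0=0$). This is precisely where dimension four enters in the paper's approach, via Lemma~\ref{r4d}; your self-duality sign $\alpha\wedge\alpha\ge 0$ is another genuinely four-dimensional fact, but it does not address closedness and so cannot carry the argument on its own.
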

\noindent A higher dimensional version of this result is given in Theorem \ref{mainhd}.

\vspace{0.2cm} {\bf Acknowledgments:} We are grateful to Tian-Jun Li for useful
suggestions about this note.
The second author would also like to thank the organizers of the G\"okova Geometry Topology
Conference for providing excellent atmosphere of research.

\section{Exact forms on almost complex 4-manifolds}
Let $(M^{2n}, J)$ be an almost complex manifold. The almost complex
structure $J$ acts as an involution, by $\alpha(\cdot, \cdot)
\rightarrow \alpha(J\cdot, J\cdot)$, on the space of real $C^{\infty}$-forms $\Omega^2$.
Thus we have the splitting
into $J$-invariant,
respectively, $J$-anti-invariant 2-forms
\begin{equation} \label{formtype}
\Omega^2=\Omega_J^+\oplus \Omega_J^-.
\end{equation}
We denote by $\mathcal Z^k$ the space of closed $k$-forms on $M$ and by $\mathcal
Z_J^{\pm} = \mathcal Z^2 \cap \Omega_J^{\pm}$ the corresponding
subspaces of $\mathcal Z^2$.
The sub-groups $H_J^+$, $H_J^-$ of $H^2(M, \mathbb{R})$,
\begin{equation} \nonumber
H_J^{\pm}=\{ \mathfrak{a} \in H^2(M;\mathbb R) | \exists \; \alpha\in \mathcal
Z_J^{\pm} \mbox{ such that } [\alpha] = \mathfrak{a} \} \,
\end{equation}
and their dimensions $h^{\pm}_J = {\rm dim}(H_J^{\pm})$
are interesting invariants of the almost complex manifold $(M^{2n},J)$. They play a secondary role in our note,
but the reader can consult the references for more on these subgroups \cite{LZ2007, FT, DLZ, DLZ2, AT}.
Here we just recall the particularity of dimension 4 for these subgroups; namely, for a compact 4-dimensional manifold $M^4$
and any almost complex structure $J$ on $M^4$, the subgroups  $H_J^+$, $H_J^-$ induce a direct sum
decomposition of $H^2(M^4, \mathbb{R})$. In particular, $h^+_J + h^-_J = b_2$, where $b_2$ is the second Betti number
of $M^4$. These facts are no longer true in higher dimensions for general almost complex structures.
If $b^{+}$ (resp. $b^-$) are the ``self-dual'' (resp. ``anti-self-dual'') Betti numbers of a compact
manifold $(M^4, J)$, it is also known that $h^-_J \leq b^+$, $h^+_J \geq b^-$, with the inequalities being strict
if $J$ is tamed by a symplectic form \cite{DLZ}.

\vspace{0.2cm}

We begin with the following proposition.
\begin{prop}\label{lemma4d} Suppose $(M^4, J)$ is a compact almost complex 4-manifold. Then
\begin{equation} \label{4dincl}
d\Omega_J^-\subsetneq d\Omega^2 = d\Omega_J^+ \;.
\end{equation}
Moreover, the quotient space $(d\Omega^2)/(d\Omega_J^-)$ is always infinite dimensional.
\end{prop}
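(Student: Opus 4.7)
The plan is to prove the two assertions separately. First, the equality $d\Omega^2=d\Omega_J^+$ reduces to the inclusion $d\Omega_J^-\subseteq d\Omega_J^+$, since every $\phi\in\Omega^2$ splits as $\phi=\phi^++\phi^-$ with $\phi^\pm\in\Omega_J^\pm$. For a given $\alpha\in\Omega_J^-$, producing $\beta\in\Omega_J^+$ with $d\alpha=d\beta$ is equivalent to realising $\alpha$ as the $J$-anti-invariant part of some closed $2$-form. I plan to do this via Hodge theory: fix a $J$-compatible metric, and consider $L:=P^-\circ d\colon\Omega^1\to\Omega_J^-$, with formal adjoint $L^*=d^*|_{\Omega_J^-}$ and kernel $\mathcal{Z}_J^-$ (in dimension four every $\alpha\in\Omega_J^-$ is self-dual with respect to a $J$-compatible metric, so closed is equivalent to harmonic). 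The technical heart is that $LL^*$ is elliptic on $\Omega_J^-$: a direct computation gives $\langle\sigma_\xi(LL^*)\alpha,\alpha\rangle=|\iota_\xi\alpha|^2$, and every nonzero $\alpha\in\Omega_J^-$ is pointwise non-degenerate on a $4$-manifold (in the standard basis one computes $\alpha\wedge\alpha=2(a^2+b^2)\,e^1\wedge\cdots\wedge e^4$). Elliptic theory then yields the closed-range decomposition $\Omega_J^-=\mathcal{Z}_J^-\oplus L(\Omega^1)$; writing $\alpha=\alpha_h+P^-(d\eta)$ and using $d(d\eta)=0$ one gets $d\alpha=-d(P^+(d\eta))$, so $\beta:=-P^+(d\eta)\in\Omega_J^+$ solves the problem.

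For the strict inclusion and the infinite-dimensional quotient, a short diagram chase based on $\Omega^2=\Omega_J^+\oplus\Omega_J^-$ and $d\Omega^2\cong\Omega^2/\mathcal{Z}^2$ identifies $d\Omega^2/d\Omega_J^-\cong\Omega^2/(\mathcal{Z}^2+\Omega_J^-)\cong\Omega_J^+/P^+(\mathcal{Z}^2)$. Using the Hodge decomposition $\mathcal{Z}^2=\mathcal{H}^2\oplus d\Omega^1$ and the fact that $P^+(\mathcal{H}^2)$ is finite-dimensional, the claim reduces to showing that the operator $L^+:=P^+\circ d\colon\Omega^1\to\Omega_J^+$ has infinite-dimensional cokernel in $\Omega_J^+$. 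This is the main difficulty of the proposition, because $L^+$ is \emph{not} elliptic: a direct symbol computation shows that $\sigma_\xi(L^+)$ has a $1$-dimensional kernel $\mathbb{R}\xi$ and a $1$-dimensional cokernel at every $\xi\neq 0$, so Fredholm theory does not apply and the cokernel cannot simply be read off from the kernel of an adjoint.

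I plan to handle this by a local Fourier construction. Pick $p\in M$ and a chart on which $J$ is $C^0$-close to the constant model $J_0$ on $\mathbb{R}^4$ with $\omega_0=dx^1\wedge dx^2+dx^3\wedge dx^4$. In the standard basis $\sigma_1=dx^1\wedge dx^3-dx^2\wedge dx^4$, $\sigma_2=dx^1\wedge dx^4+dx^2\wedge dx^3$ of $\Omega_{J_0}^-$, the equation $d\beta=d\alpha$ with $\alpha=A\sigma_1+B\sigma_2$ and $\beta=e^{2\pi ikx^1}\,dx^1\wedge dx^2$ becomes an overdetermined system of four linear equations for the two Fourier coefficients of $(A,B)$, and a direct check shows that it is inconsistent at the Fourier mode $(k,0,0,0)$ for every $k\neq 0$. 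Cutting off by a bump function supported in the chart then produces a sequence $\beta_k\in\Omega_J^+$ whose classes in $\Omega_J^+/P^+(\mathcal{Z}^2)$ are linearly independent, giving both the infinite-dimensionality and the strict inclusion; the transfer to an arbitrary compact $(M^4,J)$ follows by a standard high-frequency perturbation argument, since the obstruction is symbolic and therefore stable under small perturbations of $J$.
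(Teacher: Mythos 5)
Your first half is correct, though heavier than necessary: fixing a $J$-compatible metric $g$, the paper obtains $d\Omega_g^+=d\Omega_g^-=d\Omega^2$ (hence $d\Omega^2=d\Omega_J^+$, since $\Omega_g^-\subset\Omega_J^+$) directly from the Hodge decomposition $\alpha=\alpha_h+d\alpha_1+*d\alpha_2$ by flipping the sign of the exact part, whereas you reach the equivalent decomposition $\Omega_J^-=\mathcal{Z}_J^-\oplus P^-(d\Omega^1)$ through ellipticity of $LL^*$. Both work; yours is a legitimate alternative. Your reduction of the second assertion to the statement that $\Omega_J^+/P^+(\mathcal{Z}^2)$ is infinite dimensional is also correct.

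The proof of that last statement, however, has a genuine gap. A small but real error first: $\beta_k=e^{2\pi ikx^1}\,dx^1\wedge dx^2$ is a \emph{closed} form, so $d\alpha=d\beta_k$ is solved by $\alpha=0$ and $\beta_k$ represents the zero class in $\Omega_J^+/P^+(\mathcal{Z}^2)$; there is no inconsistency at the mode $(k,0,0,0)$. You would need to oscillate transversally to the $dx^1\wedge dx^2$-plane (e.g.\ $e^{2\pi ikx^3}\,dx^1\wedge dx^2$ does give an inconsistent constant-coefficient system, mode by mode, on the torus). The more serious problem is the globalization. Whether $d\beta\in d\Omega_J^-$ is a \emph{global} solvability question for the overdetermined operator $d|_{\Omega_J^-}$, and the mode-by-mode decoupling you exploit is exactly what is destroyed by the cutoff $\chi$ and by the variable coefficients of $J$: nothing in the proposal rules out a globally defined $\alpha\in\Omega_J^-$, supported everywhere on $M$ and mixing all frequencies, with $d\alpha=d(\chi\beta_k)$. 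The assertion that ``the obstruction is symbolic and therefore stable'' has no content here -- the symbol of $P^+\circ d$ has a pointwise cokernel on every almost complex $4$-manifold, and a pointwise cokernel of the symbol does not by itself produce a lower bound on the global cokernel.

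The missing idea, which is how the paper closes this, is a global Hodge-theoretic rigidity statement: if $\beta\in\Omega_g^+$ and $d\beta=d\gamma$ for some $\gamma\in\Omega_J^-\subset\Omega_g^+$, then $\beta-\gamma$ is a closed self-dual $2$-form, hence harmonic. Consequently $d(f\omega)\in d\Omega_J^-$ forces $f\omega\in\mathcal{H}_g^++\Omega_J^-$, i.e.\ $f$ lies in the finite-dimensional space $\mathcal{T}_g=\{\langle\omega,h\rangle : h\in\mathcal{H}_g^+\}$. This yields an injection of $C^{\infty}(M)/\mathcal{T}_g$ into $(d\Omega^2)/(d\Omega_J^-)$ and proves strictness and infinite-dimensionality simultaneously, with no local model and no perturbation argument needed. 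Your framework is compatible with this; the step to supply is precisely this characterization of when $d\beta$ lands in $d\Omega_J^-$.
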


\begin{proof} Let $g$ be a Riemannian metric compatible with $J$ and let $\omega$ denote the fundamental form
of $(g,J)$. It is well known that
\begin{equation}\label{type-sdasd}
\Omega_J^+= C^{\infty}(M)\omega \oplus \Omega_g^- \; , \; \;
\Omega_g^+ = C^{\infty}(M)\omega \oplus \Omega_J^-,
\end{equation}
where $\Omega_g^{\pm}$ denote the spaces of $g$-self-dual (resp. anti-self-dual) 2-forms.
The relation  $d\Omega^2=d\Omega_J^+$ is then an immediate consequence of:
\begin{lemma} \label{r4d}
Suppose $(M^4, g)$ is a compact Riemannian 4-manifold. Then
\begin{equation} \label{4dr=} d\Omega_g^+ =  d\Omega_g^- =d\Omega^2 \;.
\end{equation}
\end{lemma}

\vspace{0.1cm}
\noindent {\it Proof of Lemma \ref{r4d}.} By Hodge decomposition,
any 2-form $\alpha$ is written as $\alpha=\alpha_h+d\alpha_1+*d\alpha_2$, where the terms are respectively the harmonic,
the exact and the co-exact parts of $\alpha$. A form $\alpha$ (resp. $\beta$)
in $\Omega_g^+$ (resp. $\Omega_g^-$) further satisfies $d(\alpha_2-\alpha_1)=0$ (resp. $d(\beta_2+\beta_1)=0$).

Now, if we are given $\alpha\in \Omega_g^+$, then $d\alpha=d*d\alpha_2$. Take $\beta=-d\alpha_1+*d\alpha_2$.
By the observations above it is clear that $\beta\in  \Omega_g^-$ and $d\beta=d\alpha$.
Thus, we have shown $d\Omega_g^+\subset d\Omega_g^-$. The inclusion $d\Omega_g^-\subset d\Omega_g^+$ is similar to prove.
The last equality in (\ref{4dr=}) follows now from $\Omega^2 = \Omega_g^+ \oplus \Omega_g^-$. $\Box$

\vspace{0.2cm}
To finish the proof of the proposition it remains to verify the claim about the dimension of the quotient $(d\Omega^2)/(d\Omega_J^-)$.
Let $\mathcal{H}^+_g$ be the space of harmonic self-dual
forms and let
$$\mathcal{T}_g = \{ f \in C^{\infty}(M)| \exists \alpha \in \mathcal{H}^+_g , \; <\omega, \alpha> = f \}. $$
Since $\mathcal{H}^+_g$ is finite dimensional, $\mathcal{T}_g$ is a finite dimensional subspace of $C^{\infty}(M)$.
With the notations above, ${\rm dim} (\mathcal{T}_g) = b^+ - h^-_J$.
It is clear that
$$\mathcal{H}^+_g + \Omega_J^- = \mathcal{T}_g \omega \oplus \Omega_J^- \; .$$ This immediately implies
$$d\Omega^-_J = d(\mathcal{H}^+_g + \Omega_J^-) = d( \mathcal{T}_g \omega \oplus \Omega_J^-) .$$
Moreover, it can be easily seen more: for any $\beta \in \Omega^+_g$, $d\beta \in d\Omega^-_J$
if and only if $\beta \in \mathcal{H}^+_g + \Omega_J^- = \mathcal{T}_g \omega \oplus \Omega_J^-$.
This implies that the map
$$ C^{\infty}(M) \rightarrow d\Omega^+_g , \; \; f \mapsto d(f \omega) $$
descends to an isomorphism between the quotient spaces $C^{\infty}(M)/ \mathcal{T}_g$ and $(d\Omega^+_g)/(d\Omega_J^-)$.
It follows that the inclusion $d\Omega^-_J \subset d\Omega^+_g$ is strict and that the quotient is infinite dimensional.
\end{proof}

\begin{remark} {\rm Note that Proposition \ref{lemma4d} can be rephrased in terms of currents. 
A consequence is that each homology class in $H_+^J$ has infinitely many
$J$-invariant closed representatives, while each class in $H_-^J$ has a unique $J$-anti-invariant representative. 
Here $H^J_{\pm}\subset H_2(M; \mathbb R)$ are the $J$-(anti)-invariant homology groups defined by currents (see \cite{LZ2007, DLZ}).

This should be compared with the fact that each cohomology class in $H^-_J$, in dimension 4, has a unique
(necessarily harmonic) $J$-anti-invariant representative. }
\end{remark}

\begin{remark} \label{hdim} {\rm Proposition \ref{lemma4d} is no longer true in dimension higher than 4.
Indeed, if $J$ is any complex structure
on a compact manifold of dimension 6 or higher, $d\Omega^-_J$ cannot be a subset of $d\Omega^+_J$.
This is because a form $\beta \in \Omega^-_J$ is written as $\beta = \alpha + \overline{\alpha}$, with $\alpha$ a complex form
of type $(2,0)$, hence, generically, $d\beta$ contains terms of type $(3,0)$ and $(0,3)$.
But $\Omega^+_J = [\Omega^{1,1}_J]_{\mathbb{R}}$, so for an integrable $J$,
$d\Omega^+_J \subset [\Omega^{2,1}_J \oplus \Omega^{1,2}_J]_{ \mathbb{R}}$. It would be interesting to know if there exists
any almost complex manifold $(M^{2n}, J)$ with $2n \geq 6$ so that Proposition \ref{lemma4d} still holds,
or in what form the result might be generalized to higher dimensions.}
\end{remark}
As an application of Proposition \ref{lemma4d}, we compute the cohomology of some $J$-modified
de Rham-type of complexes.
Let us denote by $d^+_J$ (resp. $d^-_J$)
the composition of the differential $\Omega^1 \buildrel {d}\over\longrightarrow \Omega^2$
with the projection $\Omega^2 \rightarrow \Omega^+_J$ (resp. $\Omega^2 \rightarrow \Omega^-_J$).
Note that both ${\rm Ker}(d^-_J)$ and ${\rm Ker}(d^+_J)$ contain the space of closed 1-forms
$\mathcal{Z}^1$. Replacing the $\Omega^1$ and $\Omega^2$ terms in the de Rham differential complex,
we consider the following $J$-modified complexes
\begin{equation} \label{longcx+}
0 \longrightarrow \Omega^0 \buildrel {d}\over\longrightarrow {\rm Ker}(d^-_J)
\buildrel {d}\over\longrightarrow \Omega^+_J \buildrel {d}\over\longrightarrow \Omega^3
\buildrel {d}\over\longrightarrow \Omega^4
\longrightarrow 0 \; ,
\end{equation}
\begin{equation} \label{longcx-}
0 \longrightarrow \Omega^0 \buildrel {d}\over\longrightarrow {\rm Ker}(d^+_J)
\buildrel {d}\over\longrightarrow \Omega^-_J \buildrel {d}\over\longrightarrow \Omega^3
\buildrel {d}\over\longrightarrow \Omega^4
\longrightarrow 0 \; .
\end{equation}
The following are immediate observations (using also Proposition \ref{lemma4d}).
\begin{prop} \label{Jcomplexes} Suppose $(M^4,J)$ is a compact almost complex 4-manifold.

\vspace{0.1cm}

(i) The group $H^+_J$ (resp. $H^-_J$) is the cohomology group at $\Omega^+_J$-level for the complex (\ref{longcx+})
(resp. at $\Omega^-_J$-level for the complex (\ref{longcx-})).

\vspace{0.1cm}

(ii) For the complex (\ref{longcx+}),  the cohomology groups at levels $\Omega^0$, ${\rm Ker}(d^-_J)$, $\Omega^3$, $\Omega^4$ are the usual de Rham cohomology groups
$H^i(M, \mathbb{R})$ for $i = 0, 1, 3, 4$, respectively.

\vspace{0.1cm}

(iii) For the complex (\ref{longcx-}), the cohomology groups at levels $\Omega^0$, ${\rm Ker}(d^+_J)$, $\Omega^4$ are the usual de Rham cohomology groups
$H^i(M, \mathbb{R})$ for $i = 0, 1, 4$, respectively. At the $\Omega^3$-level, the cohomology group of the complex (\ref{longcx-}) is given by
$\mathcal{Z}^3/ d\Omega^-_J$, so it is infinite dimensional, by Proposition \ref{lemma4d}.
\end{prop}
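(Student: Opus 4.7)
The plan is to verify, for both complexes (\ref{longcx+}) and (\ref{longcx-}), that consecutive differentials compose to zero and then to compute the cohomology at each position directly from the definitions, invoking Proposition \ref{lemma4d} only where it is genuinely needed. The complex axiom $d\circ d = 0$ is inherited from the ambient de Rham differential; one only has to check that $d$ maps $\Omega^0$ into ${\rm Ker}(d^{\mp}_J)$, which follows from $d^{\mp}_J(df) = (d^2 f)^{\mp}_J = 0$, and that $d$ maps ${\rm Ker}(d^{\mp}_J)$ into $\Omega^{\pm}_J$, which is simply the definition of the kernel.

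For part (i), at the $\Omega^+_J$ position of (\ref{longcx+}) the kernel of $d:\Omega^+_J\to\Omega^3$ is $\mathcal{Z}^+_J$, while the image of $d:{\rm Ker}(d^-_J)\to\Omega^+_J$ is $d\Omega^1 \cap \Omega^+_J$: indeed $\alpha\in{\rm Ker}(d^-_J)$ means precisely that $d\alpha \in \Omega^+_J$, and conversely every exact $J$-invariant 2-form arises in this way. The cohomology is therefore $\mathcal{Z}^+_J/(d\Omega^1\cap\Omega^+_J)$, which is exactly the image of the natural map $\mathcal{Z}^+_J\to H^2(M;\mathbb{R})$, i.e., $H^+_J$. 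The same argument applied to (\ref{longcx-}) identifies the $\Omega^-_J$-level cohomology with $H^-_J$. For the remaining positions in parts (ii) and (iii), the $\Omega^0$ level gives $\{f:df=0\}=H^0(M;\mathbb{R})$ in both complexes, the ${\rm Ker}(d^{\mp}_J)$ level gives $\mathcal{Z}^1/d\Omega^0 = H^1(M;\mathbb{R})$ (the relevant kernel is $\mathcal{Z}^1$ because $d\alpha$ already lies in $\Omega^{\pm}_J$, so its vanishing there forces $d\alpha=0$), and the $\Omega^4$ level gives $\Omega^4/d\Omega^3 = H^4(M;\mathbb{R})$.

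The one nontrivial computation, and the only place where Proposition \ref{lemma4d} enters, is at the $\Omega^3$ level, where the cohomology is $\mathcal{Z}^3/d\Omega^{\pm}_J$. For (\ref{longcx+}), Proposition \ref{lemma4d} yields $d\Omega^+_J = d\Omega^2$, so the quotient collapses to $\mathcal{Z}^3/d\Omega^2=H^3(M;\mathbb{R})$. For (\ref{longcx-}) one is left with $\mathcal{Z}^3/d\Omega^-_J$, and the natural surjection of this group onto $H^3(M;\mathbb{R}) = \mathcal{Z}^3/d\Omega^2$ has kernel $(d\Omega^2)/(d\Omega^-_J)$, which is infinite dimensional by the same Proposition; hence the $\Omega^3$-level cohomology of (\ref{longcx-}) is infinite dimensional, establishing the final assertion of (iii). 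There is no substantive obstacle in the argument: all the content of the proposition is carried by Proposition \ref{lemma4d}, and the rest is a careful accounting of kernels and images.
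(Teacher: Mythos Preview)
Your proof is correct and follows precisely the approach the paper has in mind: the paper itself offers no detailed argument, merely flagging the statements as ``immediate observations (using also Proposition \ref{lemma4d}),'' and your write-up is exactly the direct kernel/image bookkeeping that underlies those observations, with Proposition \ref{lemma4d} invoked only at the $\Omega^3$-levels where it is actually needed.
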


\begin{remark} {\rm All statements of Proposition \ref{Jcomplexes} are still true in higher dimensions,
except those about the cohomology at the $\Omega^3$-level which use Proposition \ref{lemma4d}.
Note also that for a compact 4-manifold
${\rm Ker}(d^+_J) = \mathcal{Z}^1$, so the second differential in the complex (\ref{longcx-}) is just the zero map.}
\end{remark}

\section{Reformulations of Donaldson's question}
Given a compact almost complex manifold $(M^{2n}, J)$, let $\Omega_J^{\oplus}$ denote the cone of $J$-invariant,
$J$-positive forms, i.e. the forms $\omega \in \Omega^2$ such that $\omega(\cdot, J\cdot)$ defines a Riemannian metric
on $M$. Let us further denote by $\mathcal{S}_J^c$, $\mathcal{S}_J^t$, the set of symplectic forms that
are $J$-compatible, respectively, $J$-tamed. It is easy to see that
$$ \mathcal{S}_J^c = \Omega_J^{\oplus} \cap \mathcal{Z}^2, \; \; \mathcal{S}_J^t = (\Omega_J^{\oplus} \oplus \Omega_J^-) \cap \mathcal{Z}^2 \; .$$
With these notations, Question \ref{Donaldson} is:
if $\mathcal{S}_J^t \neq \emptyset$ is $\mathcal{S}_J^c \neq \emptyset$ as well?

\vspace{0.1cm}

Now we get characterizations in terms of spaces of exact 3-forms. In view of Proposition \ref{lemma4d}, it is natural to ask
how the space $d\Omega_J^{\oplus}$ fits in with respect to the spaces
$d\Omega_J^+$, $d\Omega_J^-$.
\begin{question} \label{do+=d+}
Is it true that either $d\Omega_J^-\cap  d\Omega_J^{\oplus}=\emptyset \; \mbox{ or } \;   d\Omega_J^{\oplus} = d\Omega_J^+\; $?
\end{question}
\noindent Seemingly unrelated, we could also ask:

\begin{question}\label{+to-}
 If $\alpha \in \Omega_J^-$ satisfies $d\alpha\in d\Omega_J^{\oplus}$,
 is it true that $d(-\alpha)\in d\Omega_J^{\oplus}$ as well?
\end{question}
\noindent It turns out that both Questions \ref{do+=d+} and \ref{+to-} are equivalent to
 Donaldson's ``tamed to compatible" question.

\begin{prop}\label{Q=Q} Suppose $(M^{2n}, J)$ is a compact almost complex manifold.
Then Questions \ref{Donaldson}, \ref{do+=d+} and \ref{+to-} are all equivalent for the given $J$.

\end{prop}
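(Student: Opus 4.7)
The plan is to translate each question into a statement about the image $d\Omega_J^{\oplus}\subset\Omega^3$ and then close a short cycle of implications. The one non-formal ingredient is a compactness observation: on the compact manifold $M$, if $\omega_0\in\Omega_J^{\oplus}$ is fixed and $\beta$ is any smooth 2-form, then $\beta+t\omega_0\in\Omega_J^{\oplus}$ for all sufficiently large $t>0$, because $\omega_0(X,JX)$ is uniformly bounded below by a positive multiple of $|X|^2$ while $\beta(X,JX)$ is uniformly bounded above by a multiple of $|X|^2$.

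First I would record the dictionary. A tamed form decomposes as $\omega+\alpha$ with $\omega\in\Omega_J^{\oplus}$ and $\alpha\in\Omega_J^-$, and closedness is equivalent to $d\omega=d(-\alpha)$; thus $\mathcal{S}_J^t\neq\emptyset$ iff $d\Omega_J^{\oplus}\cap d\Omega_J^-\neq\emptyset$ iff some $\alpha\in\Omega_J^-$ has $d\alpha\in d\Omega_J^{\oplus}$, while $\mathcal{S}_J^c\neq\emptyset$ iff $0\in d\Omega_J^{\oplus}$. If $J$ is not tamed, the first alternative in Question \ref{do+=d+} holds, the hypothesis of Question \ref{+to-} is never satisfied, and Question \ref{Donaldson} is vacuous, so all three admit the trivial affirmative answer. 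It therefore remains to treat the case when $J$ is tamed; there Question \ref{do+=d+} reduces to whether $d\Omega_J^{\oplus}=d\Omega_J^+$.

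I would then close the cycle Question \ref{Donaldson} $\Rightarrow$ Question \ref{do+=d+} $\Rightarrow$ Question \ref{+to-} $\Rightarrow$ Question \ref{Donaldson}. For $\ref{Donaldson}\Rightarrow\ref{do+=d+}$, pick $\omega_0\in\mathcal{S}_J^c$; given $\beta\in\Omega_J^+$, the form $\beta+t\omega_0$ lies in $\Omega_J^{\oplus}$ for $t$ large and has the same differential as $\beta$, so $d\Omega_J^+\subset d\Omega_J^{\oplus}$ (reverse inclusion trivial). For $\ref{do+=d+}\Rightarrow\ref{+to-}$, given $\alpha\in\Omega_J^-$ with $d\alpha=d\omega$ for some $\omega\in\Omega_J^{\oplus}$, the form $-\omega$ lies in $\Omega_J^+$, so $d\Omega_J^+=d\Omega_J^{\oplus}$ produces $\omega'\in\Omega_J^{\oplus}$ with $d\omega'=-d\omega=d(-\alpha)$, exactly Question \ref{+to-}'s conclusion. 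For $\ref{+to-}\Rightarrow\ref{Donaldson}$, tameness gives a closed $\omega+\alpha$ with $\omega\in\Omega_J^{\oplus}$, $\alpha\in\Omega_J^-$, so $d(-\alpha)=d\omega\in d\Omega_J^{\oplus}$; applying Question \ref{+to-} with $-\alpha$ in place of $\alpha$ yields $\omega'\in\Omega_J^{\oplus}$ with $d\omega'=d\alpha=-d\omega$, whence $\omega+\omega'$ is closed, $J$-invariant, and $J$-positive, placing it in $\mathcal{S}_J^c$.

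No serious obstacle is anticipated; once the positivity-addition trick and the decomposition $\Omega^2=\Omega_J^+\oplus\Omega_J^-$ are in hand, the rest is formal bookkeeping in the cone $\Omega_J^{\oplus}$ and the linear space $\Omega_J^+$.
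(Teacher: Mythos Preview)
Your proposal is correct and follows essentially the same approach as the paper. Both proofs rest on the same two observations---the dictionary identifying tamedness with $d\Omega_J^-\cap d\Omega_J^{\oplus}\neq\emptyset$ and compatibility with $0\in d\Omega_J^{\oplus}$ (equivalently $d\Omega_J^{\oplus}=d\Omega_J^+$), together with the compactness trick that adding a large multiple of a closed positive form pushes any $J$-invariant form into $\Omega_J^{\oplus}$ without changing its differential. The only difference is organizational: the paper proves the two equivalences Q\ref{Donaldson}$\Leftrightarrow$Q\ref{do+=d+} and Q\ref{Donaldson}$\Leftrightarrow$Q\ref{+to-} separately, while you close a single cycle Q\ref{Donaldson}$\Rightarrow$Q\ref{do+=d+}$\Rightarrow$Q\ref{+to-}$\Rightarrow$Q\ref{Donaldson}, thereby replacing the paper's pair Q\ref{do+=d+}$\Rightarrow$Q\ref{Donaldson} and Q\ref{Donaldson}$\Rightarrow$Q\ref{+to-} with a single step Q\ref{do+=d+}$\Rightarrow$Q\ref{+to-} that uses the same negation idea.
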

\begin{proof} For the equivalence of Questions \ref{Donaldson} and \ref{do+=d+}, first note
that $J$ is tamed by a symplectic form if and only if $d\Omega_J^-\cap  d\Omega_J^{\oplus} \neq \emptyset$.
Indeed, suppose that $J$ is tamed by the symplectic form $\omega$, which we decompose into its $J$-invariant and
$J$-anti-invariant parts $\omega =\omega_J^+ + \omega_J^-$. By the tameness assumption, $\omega_J^+ \in \Omega_J^{\oplus}$, and since
$d\omega = 0$, we have $d(\omega_J^+) = d(-\omega_J^-) \in d\Omega_J^-\cap  d\Omega_J^{\oplus}$.
Thus, $d\Omega_J^-\cap  d\Omega_J^{\oplus} \neq \emptyset$.
Showing that $d\Omega_J^-\cap  d\Omega_J^{\oplus} \neq \emptyset$ implies that $J$ is tamed is
done by just reversing this argument.

Next, observe that $J$ is compatible with some symplectic form if and only if $  d\Omega_J^{\oplus} = d\Omega_J^+ $.
Indeed, if $  d\Omega_J^{\oplus} = d\Omega_J^+ $, since $0 \in d\Omega_J^+$, it follows that
there is $\omega \in \Omega_J^{\oplus}$ so that $d\omega = 0$. For the other implication, note that
$\Omega_J^{\oplus}- \Omega_J^{\oplus}= \Omega_J^+$, that is any $J$-invariant form can be written
as the difference of two $J$-positive forms. Indeed, if $\theta \in \Omega_J^+$, taking any
$\omega \in \Omega_J^{\oplus}$, then $(n \omega + \theta) - n \omega = \theta$ and if $n$ is large enough
$n\omega + \theta \in \Omega_J^{\oplus}$. If, additionally, we can choose $\omega$ with $d\omega = 0$, then
$d(n \omega + \theta) = d \theta$, so the equality $  d\Omega_J^{\oplus} = d\Omega_J^+ $ is proved.

 \vspace{0.1cm}

Now we show that Question \ref{+to-} implies Question \ref{Donaldson}.
Suppose that $J$ is tamed by the symplectic form $\omega=\omega_J^++\omega_J^-$. Then $\omega_J^+ \in \Omega_J^{\oplus}$
and $d(-\omega_J^-) = d\omega_J^+ \in d\Omega_J^{\oplus}$.
Question \ref{+to-} implies that there exists a $J$-positive form $\tilde{\omega}_J^+$ such that $d\tilde{\omega}_J^+=d\omega_J^-$.
This implies $d(\omega_J^++\tilde{\omega}_J^+)=0$, so $\omega_J^++\tilde{\omega}_J^+$ is a symplectic form compatible with $J$.

Conversely, suppose that Question \ref{Donaldson} is true.  If for an $\alpha\in \Omega_J^-$ there exists $\beta \in \Omega_J^{\oplus}$
such that $d\alpha=d\beta$, then $\beta-\alpha$ is a symplectic form taming $J$. From Question \ref{Donaldson}, there exists
a symplectic form $\omega$ compatible with $J$. Further for a large number $n$, $n\omega-\beta$ is $J$-positive,
and  $d(n\omega-\beta)=d(-\alpha)$, which implies Question \ref{+to-}.
\end{proof}

Combining Propositions \ref{Q=Q} and \ref{lemma4d}, we get:
\begin{prop} \label{akJ4d} For a compact almost complex 4-manifold $(M^4,J)$, $J$ admits a compatible symplectic structure
if and only if $d\Omega_J^{\oplus}=d\Omega^2$.
\end{prop}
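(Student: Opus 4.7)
The plan is to assemble this statement from two ingredients that are already in hand: the dimension-free equivalence extracted inside the proof of Proposition \ref{Q=Q}, and the dimension-4 identification of exact ranges in Proposition \ref{lemma4d}. Recall that within the proof of Proposition \ref{Q=Q} it was shown, in arbitrary dimension, that $J$ admits a compatible symplectic form if and only if $d\Omega_J^\oplus = d\Omega_J^+$. Combining this with the identity $d\Omega_J^+ = d\Omega^2$ supplied by Proposition \ref{lemma4d} formally gives the equivalence claimed here, and I would present a short direct proof along these lines.

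For the reverse implication, I would simply note that $0 \in d\Omega^2$, so if $d\Omega_J^\oplus = d\Omega^2$ there exists $\omega \in \Omega_J^\oplus$ with $d\omega = 0$, which is by definition a $J$-compatible symplectic form.

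For the forward implication, I would fix a $J$-compatible symplectic form $\omega_0$ and show that every exact 2-form is the differential of some $J$-positive form. Given an arbitrary $\theta \in \Omega^2$, split it as $\theta = \theta^+ + \theta^-$ with $\theta^\pm \in \Omega_J^\pm$ using \eqref{formtype}. By Proposition \ref{lemma4d}, there exists $\eta \in \Omega_J^+$ with $d\eta = d\theta^-$, so that $d\theta = d(\theta^+ + \eta)$ with $\theta^+ + \eta \in \Omega_J^+$. For $n$ sufficiently large, the form $n\omega_0 + \theta^+ + \eta$ lies in $\Omega_J^\oplus$ (since $\Omega_J^\oplus$ is open in $\Omega_J^+$ and contains $\omega_0$), while still having differential equal to $d\theta$. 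This shows $d\Omega^2 \subset d\Omega_J^\oplus$, and the opposite inclusion is trivial.

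I do not foresee a serious obstacle: the content is essentially a repackaging, with the substantive work already done in Propositions \ref{Q=Q} and \ref{lemma4d}. The only mildly non-trivial step is the use of Proposition \ref{lemma4d} to absorb the anti-invariant part $\theta^-$ into an invariant form with the same differential, which is exactly what the dimension-4 statement $d\Omega_J^- \subset d\Omega_J^+$ is designed to provide.
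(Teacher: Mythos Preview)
Your proposal is correct and follows essentially the same approach as the paper, which simply states that the result follows by combining Propositions \ref{Q=Q} and \ref{lemma4d}. You have spelled out this combination explicitly; the only minor remark is that once Proposition \ref{lemma4d} gives $d\Omega^2 = d\Omega_J^+$, the splitting $\theta = \theta^+ + \theta^-$ is unnecessary --- you can directly choose $\eta \in \Omega_J^+$ with $d\eta = d\theta$ and then use $n\omega_0 + \eta$.
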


The proof of Theorem \ref{main} is now immediate.

\vspace{0.2cm}

\noindent {\it Proof of Theorem \ref{main}.} As in the proof of Proposition \ref{Q=Q}, observe that given $\alpha \in \Omega_J^-$,
the existence of a $J$-tamed symplectic form whose $J$-anti-invariant part is $\alpha$ is equivalent
to $d(-\alpha) \in d\Omega_J^{\oplus}$. The statement then follows from Proposition \ref{akJ4d}. $\Box$

\vspace{0.2cm}

\begin{remark} {\rm (i) With the notations above, Theorem \ref{main} can be restated as:
$\mathcal{S}_J^c \neq \emptyset$ if and only if the projection map $\Omega_J^{\oplus} \oplus \Omega_J^- \rightarrow \Omega_J^-$
when restricted to $\mathcal{S}_J^t$ is still onto.

\vspace{0.1cm}

(ii) In \cite{LZ2007}, Tian-Jun Li and the second author found the precise relationship between
the tame and compatible {\em cohomology} cones. Denoted by $\mathcal{K}^t_J$, $\mathcal{K}^c_J$,
these are, respectively, the images of $\mathcal{S}_J^t$ and $\mathcal{S}_J^c$ under the natural map $\mathcal{Z}^2 \rightarrow H^2(M, \mathbb{R})$.
In dimension 4, the main result of \cite{LZ2007} is:
$$\mbox{ if $\mathcal{S}_J^c \neq \emptyset$, then } \mathcal{K}^t_J = \mathcal{K}^c_J + H^-_J \; .$$
This is valid in higher dimensions as well under the additional assumption $H^+_J + H^-_J = H^2(M, \mathbb{R})$
(which holds automatically in dimension 4, see \cite{DLZ}).}
\end{remark}
As in Remark \ref{hdim}, note that Theorem \ref{main} is no longer true in dimensions higher than 4. However, this
restatement of part of Proposition \ref{Q=Q} clearly holds.
\begin{theorem} \label{mainhd} Let $(M^{2n}, J)$ be a compact almost complex manifold and assume that $J$ admits
tamed symplectic forms. The following are equivalent:

(i) $J$ admits a compatible symplectic form;

(ii) For any $\alpha \in \Omega_J^-$, if there exists a $J$-tamed symplectic form whose
$J$-anti-invariant part is $\alpha$, then there also exists a $J$-tamed symplectic form whose
$J$-anti-invariant part is $-\alpha$.

\end{theorem}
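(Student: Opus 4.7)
The plan is to recognize Theorem \ref{mainhd} as a direct rewording of the equivalence between Question \ref{Donaldson} and Question \ref{+to-} that is already contained in Proposition \ref{Q=Q}. The only work is translating condition (ii) into the language of exact forms on $(M^{2n},J)$.

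First I would record the key dictionary, which is the same one used at the end of the proof of Theorem \ref{main}: for $\alpha \in \Omega_J^-$, the existence of a $J$-tamed symplectic form whose $J$-anti-invariant part equals $\alpha$ is equivalent to $d(-\alpha) \in d\Omega_J^{\oplus}$. Indeed, any such tamed form splits as $\omega = \beta + \alpha$ with $\beta \in \Omega_J^{\oplus}$, and the closedness $d\omega = 0$ reads $d\beta = -d\alpha$; conversely, any $\beta \in \Omega_J^{\oplus}$ with $d\beta = -d\alpha$ produces the tamed form $\beta + \alpha$. With this reading, condition (ii) of Theorem \ref{mainhd} says that whenever $\alpha \in \Omega_J^-$ satisfies $d(-\alpha) \in d\Omega_J^{\oplus}$, also $d\alpha \in d\Omega_J^{\oplus}$. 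After the change of variable $\alpha \mapsto -\alpha$, this is exactly the affirmative answer to Question \ref{+to-} for the given $J$.

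Having made this identification, I would invoke Proposition \ref{Q=Q} directly. For (i) $\Rightarrow$ (ii): if $J$ admits a compatible symplectic form, then Question \ref{Donaldson} holds for $J$, so by Proposition \ref{Q=Q} Question \ref{+to-} also holds, and the dictionary above yields (ii). For the converse (ii) $\Rightarrow$ (i): assuming $\mathcal{S}_J^t \neq \emptyset$ together with (ii), the latter means Question \ref{+to-} is affirmative for $J$; Proposition \ref{Q=Q} then gives an affirmative answer to Question \ref{Donaldson}, and combined with $\mathcal{S}_J^t \neq \emptyset$ this produces a $J$-compatible symplectic form, establishing (i).

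I do not foresee any substantive obstacle here, since the statement is essentially a repackaging of part of Proposition \ref{Q=Q}. The only subtle point worth flagging in the write-up is the role of the hypothesis $\mathcal{S}_J^t \neq \emptyset$: without it, condition (ii) is vacuously true (no $\alpha$ arises as the $J$-anti-invariant part of a tamed form), so it could hold on almost complex manifolds that admit no compatible symplectic form; the assumption is what prevents (ii) from being trivially satisfied and makes the implication (ii) $\Rightarrow$ (i) meaningful.
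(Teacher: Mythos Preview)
Your proposal is correct and matches the paper's own treatment: the paper does not give a separate proof of Theorem \ref{mainhd} but simply notes that ``this restatement of part of Proposition \ref{Q=Q} clearly holds,'' and you have spelled out precisely that restatement via the same dictionary used in the proof of Theorem \ref{main}. Your remark on the role of the hypothesis $\mathcal{S}_J^t \neq \emptyset$ is also accurate and worth keeping.
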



\begin{thebibliography}{99}
\bibitem{AT} D. Angella, A. Tomassini, \textit{On Cohomological Decomposition of Almost-Complex Manifolds and Deformations},
J. Symplectic Geom. 9 (2011), 403-428.

\bibitem{D} S. K. Donaldson, \textit{Two-forms on
four-manifolds and elliptic equations}, Inspired by S. S. Chern,
153--172, Nankai Tracts Math., 11, World Sci. Publ., Hackensack, NJ, 2006.
\bibitem{DLZ} T. Draghici, T.-J. Li, W. Zhang, \textit{Symplectic forms and cohomology decomposition of almost complex 4-manifolds},  Int. Math. Res. Not. IMRN 2010, no. 1, 1--17.
\bibitem{DLZ2} T. Draghici, T.-J. Li, W. Zhang, \textit{On the $J$-anti-invariant cohomology of almost complex 4-manifolds}, arXiv:1104.2511, to appear in Quart. J. Math.
\bibitem{EFV} N. Enrietti, A. Fino, L. Vezzoni, \textit{Tamed symplectic forms and SKT metrics}, arXiv 1009.0620, to appear in  J. Symplectic Geom.
\bibitem{FT} A. Fino, A. Tomassini, \textit{On some cohomological
properties of almost complex manifolds}, J. Geom. Anal. (2010) 20: 107--131.
\bibitem{Lej} M. Lejmi, \textit{Strictly nearly K\"ahler 6-manifolds are not compatible with symplectic forms},
 C. R. Math. Acad. Sci. Paris 343 (2006), no. 11-12, 759--762.
\bibitem{LZ2007} T.-J. Li, W. Zhang, \textit{Comparing tamed and compatible
symplectic cones and cohomological properties of almost complex manifolds}, Comm. Anal. Geom. 17 (2009), no. 4, 651--683.
\bibitem{LZ2010} T.-J. Li, W. Zhang, \textit{J-symplectic cones of rational four manifolds}, preprint.
\bibitem{ST} J. Streets, G. Tian, \textit{A Parabolic flow of pluriclosed metrics}, Int. Math. Res. Notices 2010 (2010), 3101--3133.
\bibitem{Pet} T. Peternell,  \textit{Algebraicity criteria for compact complex manifolds}, Math. Ann. 275 (1986), no. 4, 653--672.
\bibitem{Tau09} C. Taubes, \textit{Tamed to compatible: Symplectic forms via
moduli space integration}, J. Symplectic Geom. 9 (2011), 161--250.


\end{thebibliography}
\end{document}